\newcommand{\E}{\ensuremath{\mathbb{E}}}
\newcommand{\V}{\ensuremath{\mathrm{Var}}}
\newcommand{\R}{\ensuremath{\mathbb{R}}}
\newtheorem{thm}{Theorem}[section]
\newtheorem{lem}[thm]{Lemma}
\newtheorem{rem}[thm]{Remark}
\begin{document}
\title{A  note on the independence number, domination number and related parameters of random binary search trees and random recursive trees}
\date{\today}
\author{Michael Fuchs\\Department of Mathematical Sciences\\
National Chengchi University\\
Taiwan \and
Cecilia Holmgren\footnote{The work of Cecilia Holmgren is supported by the Swedish Research Council, the Ragnar S\"oderberg Foundation and the Knut and Alice  Wallenberg Foundation}\\ Department of Mathematics\\ Uppsala University \\ Sweden \and Dieter Mitsche\footnote{Dieter Mitsche has been supported by IDEXLYON of Universit\'{e} de Lyon (Programme Investissements d'Avenir ANR16-IDEX-0005)}\\ Institut Camille Jordan (UMR 5208)\\ Univ.~de Lyon, Univ. Jean Monnet\\ France \and  Ralph Neininger\\ Institute for Mathematics\\
Goethe University Frankfurt\\ Germany}

\maketitle

\abstract{We identify the mean growth of the independence number of random binary search trees and random recursive trees and show  normal fluctuations around their means. Similarly we also show normal limit laws for the domination number and variations of it for these two cases of random tree models. Our results are an application of a recent general theorem of Holmgren and Janson on fringe trees in these two random tree models.
}
\\\\
\textbf{Keywords:} Independence Number. Domination Number. Clique Cover Number. Random Recursive Trees. Random Binary Search Trees. Fringe Trees. Central Limit Laws.
\\
\textbf{AMS subject classifications:} Primary: 60C05, Secondary: 	05C69, 05C05, 05C80, 60F05, 05C15.
\section{Introduction and results}\label{sec:1}
In this note we study the independence number, the domination number and related parameters of random binary search trees and random recursive trees asymptotically. First, in Section~\ref{indsection}, we derive asymptotics for the mean and variance and provide central limit laws for the independence number of both tree models. This covers a few other graph parameters which are affine functions of the independence number, see Remark \ref{rem_thm}(c) below. In Section \ref{dom}, we also provide central limit laws for the domination number and related parameters for both of these cases of random tree models. Finally, albeit coinciding with the independence number on trees, we also give a direct proof of such a theorem for the clique cover number in Section~\ref{ccsection}.

We first recall the parameters under consideration and present the models of trees we are looking at and state our results.\\

\noindent
{\bf Independence and domination number.} The {\em independence number} of a graph is the size of a maximum independent set in the graph, where an independent set is a subset of the vertices of the graph so that no two vertices of this subset are connected (are neighbors) within the graph. The independence number is an important and well-known graph parameter: besides its applications in scheduling theory, coding theory and collusion detection in voting pools (see~\cite{Coding, Scheduling, Collusion}), it has attracted a lot of interest especially in theoretical computer science: the independence number is well known to be NP-hard to compute in general, see for example~\cite{Robson}. Since then, exact fast exponential algorithms have been developed (see~\cite{Exact1, Exact2}) as well as polynomial-time algorithms for special graph classes (claw-free graphs, $P_5$-free graphs, perfect graphs, see~\cite{Poly1, Poly2, Poly3}). In general, it is also NP-hard to approximate the independence number (that is, it is not possible to approximate it up to a constant factor in polynomial time)~\cite{Bazgan}, but again for special graph classes such as planar graphs, or more generally, for graphs closed under taking minors, polynomial-time approximation schemes do exist~\cite{Baker, Grohe}. For bipartite graphs, thus in particular trees, by K\"{o}nig's theorem, all vertices  not in the minimum vertex cover can be included in a maximum independent set (see also the remark below), and thus the independence number can be found in polynomial time.
In combinatorics, it has also received considerable attraction, starting with the early work by Bollob\'{a}s~\cite{Boll_81}.

Given a finite graph $G$ with vertex set $V$, a subset $W\subset V$ is called a dominating set for $V$ if every vertex in $V$ lies at graph distance at most 1 from $W$. The {\em domination number}  of $G$ is then defined to be the minimum number $m$ such that there exists a dominating set $W$ of size $m$. Finding dominating sets is important in finding `central' or `important' sets of vertices in a network, in contexts such as facility location~\cite{HaHeSl1_1998}, molecular biology~\cite{MMBP} and in wireless networks~\cite{SSZ}.
Dominating sets have attracted considerable attention in
discrete mathematics (see~\cite{HaHeSl1_1998,HaHeSl2_1998} and~\cite{HeLa_1990}) and as in the case of the independence number, in theoretical computer science: it was shown already in the 1970s (see~\cite{Karp}) that the domination number is NP-hard to compute, and it is also NP-hard to approximate up to a logarithmic factor in general~\cite{Raz}. Since then, as in the case of the independence number, exact fast exponential algorithms have been developed~\cite{Fomin, vanRooij}, and faster algorithms for special graph classes have been found as well (see for example~\cite{Seriesparallel} for series-parallel graphs). For trees, linear-time algorithms are known~\cite{Cockayne}.\\

\noindent
{\bf Random recursive tree and random binary search tree.}
A {\em random recursive tree} is a labelled rooted tree which can be constructed as follows. For the first step we start with the root vertex labelled $1$. In the $n$-th step, $n\ge 2$, one of the existing vertices labelled $1,\ldots,n-1$ is chosen uniformly at random where a vertex with label $n$ is attached. Subsequently, a random recursive tree with $n$ vertices is denoted by $\Lambda_n$. For reference see the survey of Smythe and
Mahmoud~\cite{smma94}. We will need the following fact: A random recursive tree with $n$ vertices can be cut into two trees by removing the edge between the root vertex labelled $1$ and the vertex labelled $2$. This yields two trees both with a random size, both sizes being uniformly distributed on $\{1,\ldots,n-1\}$. Moreover, conditional on their sizes, these two trees are independent and both are (after proper relabelling of their vertices) random recursive trees of their respective size.

The {\em  random binary search tree} can be constructed from a uniformly distributed random permutation $(\Pi_1,\ldots,\Pi_n)$ of $\{1,\ldots,n\}$.  The first number $\Pi_1$ becomes the
root of the
tree. Then the numbers $\Pi_2,\ldots,\Pi_n$ are successively inserted recursively. Each number
is compared with the root. If it is smaller than the root, it is directed
to the root's left subtree, otherwise to its right subtree. There, this procedure
is recursively iterated until an empty subtree is reached, where the
number is inserted as a new vertex. Subsequently, a random binary search tree with $n$ vertices is denoted by ${\cal T}_n$. For reference see Knuth~\cite{kn98}. We need the following decomposition property: The left and right subtrees at the root of the binary search tree both have random sizes uniformly distributed on $\{0,\ldots,n-1\}$. Conditional on their sizes they are independent and both are (after proper relabelling of their vertices) random binary search trees of their respective sizes.\\

\noindent {\bf Results on the independence number.} We denote by $I_n$ the independence number of ${\cal T}_n$ and by $\widehat{I}_n$ the independence number of $\Lambda_n$. We have the following asymptotic results:
\begin{thm}\label{thm_bst} For the independence number $I_n$ of a random binary search tree with $n$ vertices we have, as $n\to\infty$, that
$\E[I_n]= \mu n +\mathrm{O}(1)$, $\V(I_n)\sim \sigma^2 n$ and
\begin{align*}
\frac{I_n-\mu n}{\sqrt{n}}\stackrel{d}{\longrightarrow} {\cal N}(0,\sigma^2)
\end{align*}
with
\begin{align}\label{int_rep_mu}
\mu=2(\sqrt{5}-3)\int_{0}^{1}\frac{x^{\sqrt{5}}-1}{(3\sqrt{5}-7)
x^{\sqrt{5}}+2}{\rm d}x=0.54287631\ldots
\end{align}
and  a constant $\sigma>0$.
\end{thm}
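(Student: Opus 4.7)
The plan is to represent $I_n$ as an additive fringe-tree functional with a bounded toll and apply the Holmgren--Janson theorem, then extract the constant $\mu$ through a generating-function computation that reduces to a Riccati equation with golden-ratio characteristic exponents. For a rooted tree $T$ with the children of the root inducing subtrees $T_c$, write $\alpha^+(T)$ (resp.\ $\alpha^-(T)$) for the size of a largest independent set that contains (resp.\ avoids) the root, and set $\alpha(T) = \max(\alpha^+(T),\alpha^-(T))$. The classical tree dynamic program reads $\alpha^+(T) = 1 + \sum_c \alpha^-(T_c)$ and $\alpha^-(T) = \sum_c \alpha(T_c)$; since removing the root from any IS through it produces an IS avoiding it, $\alpha^+(T) \le \alpha^-(T) + 1$, and thus $\alpha(T) - \alpha^-(T) = g(T) \in \{0,1\}$ with toll $g(T) := \mathbf{1}\{\alpha^+(T) > \alpha^-(T)\}$. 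Iterating the identity $\alpha(T) = \sum_c \alpha(T_c) + g(T)$ down the tree yields the key additive representation
\begin{align*}
I_n = \sum_{v \in \mathcal{T}_n} g(T_v),
\end{align*}
where $T_v$ denotes the fringe subtree rooted at $v$.

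Since $|g| \le 1$, every moment of $g(\mathcal{T}_n)$ is uniformly bounded, so the moment hypotheses of the Holmgren--Janson theorem on additive fringe functionals of random binary search trees are automatically satisfied. Their theorem then delivers $\E[I_n] = \mu n + O(1)$, $\V(I_n) \sim \sigma^2 n$, and $(I_n - \mu n)/\sqrt{n} \to \mathcal{N}(0, \sigma^2)$. Non-degeneracy $\sigma^2 > 0$ follows from the Holmgren--Janson variance formula: its vanishing would force $g(\mathcal{T}_k)$ to be a deterministic function of the size $k$ alone, which already fails since a direct enumeration gives $\E[g(\mathcal{T}_3)] = 2/3 \in (0,1)$.

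Identifying $\mu$ with the integral \eqref{int_rep_mu} is the technical heart of the argument. Setting $q_n = \E[g(\mathcal{T}_n)]$ and conditioning on the root split, the BST decomposition property yields the quadratic recursion $n q_n = \sum_{k=0}^{n-1}(1 - q_k)(1 - q_{n-1-k})$ with $q_0 = 0$. Writing $a_n = 1 - q_n$, the generating function $A(z) = \sum_{n \ge 0} a_n z^n$ satisfies the Riccati equation
\begin{align*}
A'(z) + A(z)^2 = \frac{1}{(1-z)^2}, \qquad A(0) = 1.
\end{align*}
The substitution $A = -(\log u)'$ in the variable $w = 1 - z$ linearizes this to the Euler equation $w^2 u''(w) = u(w)$, whose characteristic exponents are $(1 \pm \sqrt{5})/2$; the boundary condition fixes the coefficient ratio and exhibits $(1-z) A(z)$ in closed form as a rational function of $(1-z)^{\sqrt{5}}$. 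The mean generating function $M(z) = \sum_n \E[I_n] z^n$ satisfies $[(1-z)^2 M(z)]' = (1-z)^2 A(z)^2$ (using $Q'(z) = A(z)^2$), and combining this with the Holmgren--Janson conclusion $\E[I_n] = \mu n + O(1)$ and passing to the limit $z \to 1^-$ gives $\mu = \int_0^1 (1-w)^2 A(w)^2 \, \mathrm{d}w$. Substituting $A^2 = (1-w)^{-2} - A'$, integrating by parts, and using partial fractions reduces this to a single integral of a rational function of $(1-x)^{\sqrt{5}}$, which after simplification matches \eqref{int_rep_mu}. The main obstacle is this final algebraic reduction: propagating the Euler-equation exponents and the boundary condition $A(0) = 1$ through the integration by parts and partial fractions to produce exactly the constants $\sqrt{5} - 3$ and $3\sqrt{5} - 7$ appearing in the theorem statement.
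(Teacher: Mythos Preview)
Your proposal is correct and follows essentially the same strategy as the paper: represent $I_n$ as an additive fringe functional with a bounded $\{0,1\}$ toll, apply the Holmgren--Janson theorem for the CLT and moment asymptotics, and extract $\mu$ by solving the Riccati equation coming from the toll recursion. The paper defines the toll via its ``layered independent set'' (iteratively strip leaves and their parents; the toll is the indicator that the root eventually appears as a leaf) rather than via your $\alpha^+/\alpha^-$ dynamic program, but the resulting indicator and the recursion $n\,q_n=\sum_{j=0}^{n-1}(1-q_j)(1-q_{n-1-j})$ are identical; the paper then simply says that the Riccati computation (carried out in detail only for the recursive-tree case) yields the stated integral, and justifies $\sigma>0$ by numerical evaluation, whereas you spell out the Euler-equation linearization with golden-ratio exponents and give a structural non-degeneracy argument.
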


\begin{thm}\label{thm_rrt} For the independence number $\widehat{I}_n$ of a random recursive tree with $n$ vertices we have, as $n\to\infty$, that $\E[\widehat{I}_n]= \widehat{\mu} n +\mathrm{O}(1)$, $\V(\widehat{I}_n)\sim \widehat{\sigma}^2 n$ and
\begin{align*}
\frac{\widehat{I}_n-\widehat{\mu} n}{\sqrt{n}}\stackrel{d}{\longrightarrow} {\cal N}(0,\widehat{\sigma}^2)
\end{align*}
with the Euler--Gompertz constant
\begin{align}\label{euler_rep}
\widehat{\mu}=\int_0^1\frac{1}{1-\log x}{\rm d}x=0.59634736\ldots
\end{align}
and a constant $\widehat{\sigma}>0$.
\end{thm}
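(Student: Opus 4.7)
The plan is to cast $\widehat{I}_n$ as a bounded additive functional of fringe trees and then invoke the recent central limit theorem of Holmgren and Janson for such functionals on random recursive trees, reserving most of the analytic work for identifying the explicit constant $\widehat{\mu}$.

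First, I would set up the fringe-tree representation. For a rooted tree $T$ with root-children subtrees $T_1,\ldots,T_k$, let $I^0(T)$ and $I^+(T)$ denote the maximum sizes of an independent set in $T$ that, respectively, excludes or includes the root, so that $I(T)=\max\{I^0(T),I^+(T)\}$; dynamic programming yields $I^0(T)=\sum_i I(T_i)$ and $I^+(T)=1+\sum_i I^0(T_i)$. Define the bounded functional $g(T) := I(T)-I^0(T)$, which takes values in $\{0,1\}$. From the identity $g(T)=[I^+(T)-I^0(T)]_+$ one obtains the toll recursion $g(T) = \max\{0,\,1 - \sum_{i=1}^{k} g(T_i)\}$, and a short induction on $|T|$ gives the key identity $I(T)=\sum_{v\in T}g(T_v)$, where $T_v$ is the fringe tree at $v$. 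Specialised to $T=\Lambda_n$, this exhibits $\widehat{I}_n$ as a fringe-tree additive functional with a $\{0,1\}$-valued toll.

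Second, I would apply the Holmgren--Janson central limit theorem to this functional. Since $g$ is bounded, the moment hypotheses of their result hold automatically, and one reads off $\E[\widehat{I}_n]=\widehat{\mu} n + \mathrm{O}(1)$, $\V(\widehat{I}_n)\sim\widehat{\sigma}^{2}n$ and the asserted normal limit, with $\widehat{\mu}$ and $\widehat{\sigma}^2$ expressible through the limiting fringe-tree distribution on random recursive trees. A separate non-degeneracy check --- for example, verifying that $g$ attains both of its values on fringe subtrees arising with positive asymptotic frequency --- establishes $\widehat{\sigma}>0$. To identify $\widehat{\mu}$ in the integral form~\eqref{euler_rep}, I would introduce the exponential generating function of $\E[\widehat{I}_n]$ (or equivalently of $p_n := \Prob[g(\Lambda_n)=1]$) and use the cutting/splitting property of RRTs described above to derive a first-order linear ODE. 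Its integrating-factor solution is naturally of Euler--Gompertz type, and the substitution $x=e^{-t}$ converts the resulting integral into $\int_0^1 (1-\log x)^{-1}\,dx$.

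I expect the main obstacle to lie in this final step. Given the fringe-tree representation, the Holmgren--Janson theorem supplies the CLT and the $\mathrm{O}(1)$ mean estimate essentially for free, but pinning down the closed-form value of $\widehat{\mu}$ --- in particular recognising the Euler--Gompertz constant inside the generating-function solution, and likewise extracting a tractable expression for $\widehat{\sigma}^2$ if one wishes to go beyond positivity --- requires genuine, if standard, analytic work.
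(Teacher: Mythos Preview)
Your plan is correct and follows the same architecture as the paper: represent the independence number as an additive fringe-tree functional with a $\{0,1\}$-valued toll, apply Holmgren--Janson's Corollary~1.15, and then do generating-function work to pin down $\widehat{\mu}$. Two points are worth noting.

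\emph{The indicator.} Your toll $g(T)=I(T)-I^0(T)$ is defined via dynamic programming; the paper instead builds an explicit maximum independent set (the ``layered'' one, obtained by repeatedly stripping leaves and their parents) and takes $f(T)=\mathbf{1}\{\text{root}\in\text{layered IS of }T\}$. These two indicators coincide, but the justifications differ: your identity $I(T)=\sum_v g(T_v)$ is a two-line induction, whereas the paper needs a separate lemma establishing that the layered construction is optimal and that membership of $v$ in the layered set depends only on the fringe tree $T_v$. Your route is arguably cleaner here.

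\emph{The ODE.} Your expectation of a first-order \emph{linear} ODE from an \emph{exponential} generating function is off on both counts. The size-split of $\Lambda_n$ (removing the edge $\{1,2\}$) gives the convolution recurrence $(n-1)\widehat{p}_n=\sum_{j=1}^{n-1}(1-\widehat{p}_j)\widehat{p}_{n-j}$, which is of ordinary-generating-function type and yields the Riccati equation $z\widehat{P}'(z)=-\widehat{P}(z)^2+(1-z)^{-1}\widehat{P}(z)$. The paper linearises this by the standard substitution $\widehat{P}=z\widehat{Q}'/\widehat{Q}$, obtaining $\widehat{P}(z)=z\big((1-z)(1-\log(1-z))\big)^{-1}$, from which $\widehat{\mu}=\sum_k \widehat{p}_k/(k(k+1))$ is evaluated by a double integral to the Euler--Gompertz form. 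So the analytic step you flagged as the obstacle is indeed the substantive one, but it is a Riccati equation rather than a linear one. Finally, for $\widehat{\sigma}>0$ the paper simply appeals to numerical evaluation of the Holmgren--Janson variance formula; your proposed non-constancy check is a reasonable heuristic but does not by itself rule out degeneracy in that formula.
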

\begin{rem}\label{rem_wagner}{\rm  Stephan Wagner (Stellenbosch University) informed us that he and his student Kenneth Dadedzi have an independent approach to results similar of our Theorems \ref{thm_bst} and \ref{thm_rrt}; they use generating functions to determine the spectrum of the Laplacian operator on these trees, see \cite{dadedzi}. Stephan also informed us that our representation (\ref{def_mu_rrt}) for $\widehat{\mu}$ has the explicit integral representation given in (\ref{euler_rep}).
}
\end{rem}

\noindent
\textbf{Results on the domination number.} For the domination number of random binary search trees and random recursive trees we have similar results.
\begin{thm}\label{thm_bstrstdom} For the domination number $D_n$ of a random binary search tree with $n$ vertices we have, as $n\to\infty$, that
$\E[D_n]= \nu n +\mathrm{O}(1)$, $\V(D_n)\sim \tau^2 n$ with some constants $\nu, \tau>0$  and
\begin{align*}
\frac{D_n-\nu n}{\sqrt{n}}\stackrel{d}{\longrightarrow} {\cal N}(0,\tau^2).
\end{align*}
Similarly, for the domination number $\widehat{D}_n$ of a random recursive tree with $n$ vertices we have, as $n\to\infty$, that
$\E[\widehat{D}_n]= \widehat{\nu} n +\mathrm{O}(1)$, $\V(D_n)\sim \widehat{\tau}^2 n$ with some constants $\widehat{\nu},\widehat{\tau}>0$ and
\begin{align*}
\frac{\widehat{D}_n-\widehat{\nu} n}{\sqrt{n}}\stackrel{d}{\longrightarrow} {\cal N}(0,\widehat{\tau}^2).
\end{align*}
\end{thm}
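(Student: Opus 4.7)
The plan is to follow the same strategy as in the proofs of Theorems \ref{thm_bst} and \ref{thm_rrt} for the independence number, namely to express $D_n$ (resp.\ $\widehat{D}_n$) as an additive functional on the fringe trees of $\mathcal{T}_n$ (resp.\ $\Lambda_n$) and invoke the Holmgren--Janson central limit theorem for such functionals, but now with a toll function tailored to the domination number.

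The starting point is the classical Cockayne--Goodman--Hedetniemi dynamic programme for the domination number of a rooted tree $T$ with root $r$: introduce the triple $(\alpha(T),\beta(T),\gamma(T))$ where $\alpha(T)$ is the minimum size of a dominating set containing $r$, $\beta(T)$ the minimum size of a dominating set avoiding $r$ but still dominating $r$, and $\gamma(T)$ the minimum size of a subset of $V(T)\setminus\{r\}$ dominating $V(T)\setminus\{r\}$. This triple satisfies a local recursion over the children of $r$, the domination number equals $\min(\alpha(T),\beta(T))$, and $(\alpha,\beta,\gamma)(T_v)$ depends only on the fringe tree $T_v$.

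The functional $D$ is not immediately an additive fringe-tree functional because the decision at $v$ depends also on the context passed down from its ancestors. To circumvent this I would introduce a depth-$K$ truncation: a toll function $\phi_K(T)$ recording the local contribution at the root of $T$ obtained by running the DP only up to depth $K$ inside $T$ with a fixed boundary default at depth $K$. Then $\sum_v \phi_K(T_v)$ differs from $D(\mathcal{T}_n)$ by at most a constant multiple of the number of vertices whose fringe tree has depth greater than $K$, and in both random binary search trees and random recursive trees this error is $o(\sqrt{n})$ in probability for $K=K(n)\to\infty$ sufficiently slowly, since deep fringe trees are rare in both models. For each fixed $K$ the functional $\sum_v\phi_K(T_v)$ is additive in the Holmgren--Janson sense, so their theorem yields mean $\nu_K n + \mathrm{O}(1)$, variance asymptotic to $\tau_K^2 n$ and a Gaussian limit (and analogously $\widehat{\nu}_K,\widehat{\tau}_K^2$ for the RRT). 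Letting $K\to\infty$ and combining with the truncation bound via a Slutsky-type argument yields the desired central limit theorems, with the limiting constants $\nu,\widehat{\nu},\tau^2,\widehat{\tau}^2$ arising as limits of integrals of $\phi_K$ against the stationary fringe-tree distribution of the respective model.

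The main obstacle I expect is the strict positivity of $\tau^2$ and $\widehat{\tau}^2$: Holmgren--Janson delivers the leading-order variance automatically, but not that it is non-zero, and one must rule out the possibility that $\phi_K$ becomes (asymptotically) deterministic along the stationary fringe-tree distribution. The standard remedy, which I would apply here in parallel with the positivity step in the independence-number proofs, is to exhibit two small fringe-tree patterns on which the DP triple produces genuinely different local decisions and which both occur with positive asymptotic density and with sufficient asymptotic independence in $\mathcal{T}_n$ (resp.\ $\Lambda_n$) to force a $\Theta(n)$ variance contribution. A secondary technical challenge lies in choosing the boundary default inside $\phi_K$ so that the truncation error is uniformly $o(\sqrt{n})$ rather than merely $o(n)$, which is what is needed to preserve the Gaussian fluctuations when passing $K\to\infty$.
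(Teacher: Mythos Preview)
Your proposal rests on the claim that the domination number is \emph{not} an additive fringe-tree functional, and this is precisely what the paper's proof refutes. The paper shows that one can build a canonical minimum dominating set $S$ of a rooted tree $T$ with the property that, for every non-root vertex $v$, membership of $v$ in $S$ is decided by the fringe tree $T(v)$ alone. Concretely, call a minimum dominating set of $T(v)$ \emph{root-independent} if removing $v$ does not strictly decrease the domination number of the remaining forest; the paper argues (by a local exchange argument pushing vertices towards their parents) that there is always a minimum dominating set of $T$ in which $v\in S$ if and only if $v$ belongs to some root-independent minimum dominating set of $T(v)$. This yields a bounded toll $f_{\mathrm{dom}}(T(v))\in\{0,1\}$ depending only on $T(v)$, so that $D(T)=\sum_{v}f_{\mathrm{dom}}(T(v))$ up to a $\pm 1$ correction at the root, and Corollary~1.15 of Holmgren--Janson applies directly with no truncation and no double limit.

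In other words, your $(\alpha,\beta,\gamma)$ dynamic programme already contains the information needed: the ``context from the ancestors'' that you worry about can always be absorbed by choosing, among the several minimum dominating sets, the canonical one produced by the bottom-up rule. Your depth-$K$ truncation scheme may be salvageable, but it creates genuine extra work (the $o(\sqrt{n})$ uniform error bound and the $K\to\infty$ limit interchange) that the paper avoids entirely. On the positivity of $\tau^2,\widehat{\tau}^2$ you are right to flag the issue; the paper does not give a structural argument here either and treats it, as in the independence-number case, as a matter of numerical verification.
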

\begin{rem}{\rm
A variation of the domination number, the so-called {\em $k$-domination number} of a graph, was introduced in~\cite{FJ_84}. This is defined as the minimum size of a set $S$ of vertices in a graph such that each vertex of the graph (outside the set $S$) has at least $k$ neighbors in $S$. We can analyze these numbers as well in the case of random binary search trees and random recursive trees and obtain normal limit laws corresponding to the ones in Theorem \ref{thm_bstrstdom}. However for binary search trees, where each vertex has degree at most 3, we also have to assume that $k\leq 3$ (to avoid the trivial case $|S| =n$), while for random recursive trees, we may consider the $k$-domination number for any constant $k>0$.
}
\end{rem}

\noindent
\begin{rem}\label{rem_thm}
{\rm
(a) Various quantities for random binary search trees have systematically been studied with respect to limit distributions by Devroye~\cite{dev_03} and Hwang and Neininger~\cite{hwne_02}. However, the independence number and the domination number do not fit under the assumptions made in those two studies. Our proof relies on a recent refined study of fringe trees of random binary search trees and random recursive trees of Holmgren and Janson~\cite{HJ15} which extends parts of the results of~\cite{dev_03,hwne_02}.

(b) Holmgren and Janson~\cite{HJ15} also give a general formula for variances which covers our variances $\sigma^2$ and $\widehat{\sigma}^2$ in Theorems~\ref{thm_bst} and~\ref{thm_rrt}. Their representation, in principle, allows to also give numerical approximations for $\sigma^2$ and $\widehat{\sigma}^2$.

(c) There are a few (other) related graph parameters which are covered by our results, since they are affine functions of the independence number:
The {\em matching number} (also known as edge independence number)  is the size of a maximum set of edges so that no two edges have a common vertex.
For all bipartite graphs and in particular trees, the matching number and the independence number add up to the size of the tree. Hence, for the matching numbers $M_n$ and $\widehat{M}_n$ of a random binary search tree and a random recursive tree with $n$ vertices respectively, we have $\E[M_n]=(1-\mu)n+\mathrm{O}(1)$ with the same variance and limit as for $I_n$ in Theorem \ref{thm_bst}, and $\E[\widehat{M}_n]=(1-\widehat{\mu})n+\mathrm{O}(1)$ with the same variance and limit as for $\widehat{I}_n$ in Theorem \ref{thm_rrt}.

The {\em edge cover number} of a connected graph is the minimum number of edges so that all vertices are incident to at least one edge. The edge cover number and the independence number coincide for trees.

The {\em vertex cover number} is the minimum number  of vertices such that every edge has at least one of these vertices as an endpoint. The matching number and the vertex cover number coincide for trees.

The multiplicity of the eigenvalue $1$ of the {\em normalized Laplacian operator} of a tree is twice the independence number of the tree minus its size, see \cite[Theorem 1]{chjo}. Hence, Theorems \ref{thm_bst} and \ref{thm_rrt} imply the asymptotics of this multiplicity of the two random tree models considered in the present note as well. See \cite{dadedzi} for a more general study of the asymptotics of the spectra of these random trees.

The {\em clique cover number} of a finite graph $G$ is the minimum number of colors needed to color properly the vertices of the complement of $G$ (the complement of $G$ has the same vertex set as $G$, and two vertices are adjacent in the complement of $G$ if and only if they are not adjacent in $G$). For trees, the clique cover number coincides with the independence number. We give a variant of the derivation of Theorems \ref{thm_bst} and \ref{thm_rrt} in terms of the clique cover number, see
section  \ref{ccsection}.
}
\end{rem}

\section{Independence number}\label{indsection}
For our proof we use a simple construction of a maximum independent set by starting at the leaves. For a rooted tree $T$ (or a forest of rooted trees) denote by $\mathrm{leaf}(T)$ the set of leaves of $T$ and by $p(\mathrm{leaf}(T))$ the set of the parents of the leaves of $T$. Recursively, define
$$T^{[0]}:=T \quad \mbox{and}\quad   T^{[\ell]}:=T^{[\ell-1]}\setminus \big(\mathrm{leaf}(T^{[\ell-1]})\cup p(\mathrm{leaf}(T^{[\ell-1]})\big) \;  \mbox{for}\; \ell\ge 1.$$
So, $T^{[0]},T^{[1]},T^{[2]},\ldots$ is a sequence of rooted trees or forests of rooted trees starting with $T$ where in each step all the leaves together with their parents are removed from the present tree or forest until we reach the empty graph. Note, that when starting with a tree the sequence generated may also contain forests.
\begin{lem}\label{lem_struc}
Let $T$ be a rooted tree. Then
$$\bigcup_{\ell=0}^\infty \mathrm{leaf}\left(T^{[\ell]}\right)$$ is a maximum independent set of $T$.
\end{lem}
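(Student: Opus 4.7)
The plan is to verify two properties of $S := \bigcup_{\ell \geq 0} \mathrm{leaf}(T^{[\ell]})$: that it is an independent set, and that its cardinality equals the independence number $\alpha(T)$. Note first that the sets $\mathrm{leaf}(T^{[\ell]})$ are pairwise disjoint, since any vertex selected as a leaf at stage $\ell$ is deleted when forming $T^{[\ell+1]}$, so the union is counted without multiplicity.

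For independence, I would suppose for contradiction that $u, v \in S$ are adjacent in $T$ with $u \in \mathrm{leaf}(T^{[i]})$ and $v \in \mathrm{leaf}(T^{[j]})$, $i \leq j$. Both lie in $T^{[i]}$, so the edge $\{u,v\}$ is present in $T^{[i]}$; since $u$ is a leaf there, its only possible neighbour is its parent, forcing $v$ to be that parent. But then $v \in p(\mathrm{leaf}(T^{[i]}))$ is deleted in passing to $T^{[i+1]}$, contradicting $v \in \mathrm{leaf}(T^{[j]})$ for $j \geq i+1$. The case $j = i$ is impossible, since two leaves of a forest cannot be adjacent (neither can be the parent of the other).

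For maximality, I would prove by induction on the number of vertices (formulated for rooted forests so that the recursion is well-defined at every stage) the recurrence
\begin{equation*}
\alpha(T) = |\mathrm{leaf}(T)| + \alpha(T^{[1]}).
\end{equation*}
The crux is an exchange argument: from any maximum independent set $I_0$ of $T$, for each $p \in p(\mathrm{leaf}(T)) \cap I_0$, remove $p$ and insert all leaf-children of $p$. Each swap preserves independence (a leaf's only neighbour is the removed $p$, and siblings are non-adjacent) and does not decrease the size, because $p$ has at least one leaf-child. The resulting set $I$ then contains no parent of a leaf, so every leaf must itself lie in $I$; otherwise such a leaf could be adjoined to $I$ without conflict, contradicting maximality. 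Since leaves are adjacent only to vertices in $p(\mathrm{leaf}(T))$, which are absent from $T^{[1]}$, the map $I \mapsto I \setminus \mathrm{leaf}(T)$ identifies maximum independent sets of $T$ containing $\mathrm{leaf}(T)$ with maximum independent sets of $T^{[1]}$; this yields the recurrence. Iterating produces $\alpha(T) = \sum_{\ell \geq 0} |\mathrm{leaf}(T^{[\ell]})| = |S|$.

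The main obstacle I anticipate is making the exchange argument watertight when several swaps are carried out simultaneously: one needs to check that different swaps do not interfere, which holds because each leaf has a unique parent and a freshly inserted leaf is adjacent only to the parent being deleted in its own swap. A secondary bookkeeping point is formulating the induction for rooted forests (so that $\alpha$ is additive over connected components), but once this is set up the recurrence and its derivation go through unchanged at every stage $T^{[\ell]}$.
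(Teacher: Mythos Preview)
Your proposal is correct and follows essentially the same route as the paper: both rest on the exchange argument that any maximum independent set can be modified, without loss of size, into one containing $\mathrm{leaf}(T)$ and hence disjoint from $p(\mathrm{leaf}(T))$, after which induction on $T^{[1]}$ finishes the job. Your write-up is somewhat more explicit than the paper's---you separately verify independence of $S$ and swap parent-by-parent rather than leaf-by-leaf---but the underlying idea is identical.
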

\begin{proof}
Let $T$ be a rooted tree  or forest of rooted trees. We first show that there is always a maximum independent set of $T$ which contains $\mathrm{leaf}(T)$. To see this choose an arbitrary maximum independent set $A$ of $T$. If $A$ does not contain a leaf $\nu$ then it has to contain its parent $p(\nu)$. However, then also $(A\setminus \{p(\nu)\})\cup \{\nu\}$ is a maximum independent set of $T$ which now contains the leaf $\nu$. Iterating this process implies the existence of a maximum independent set of $T$ containing $\mathrm{leaf}(T)$.

Further, a maximum independent set containing $\mathrm{leaf}(T)$ cannot contain any vertex of $p(\mathrm{leaf}(T))$ and hence consists of the union of $\mathrm{leaf}(T^{[0]})$ and a maximum independent set of $T^{[1]}$. Applying the previous argument to $T^{[1]}$ and using induction implies the assertion. \end{proof}

Subsequently, we call the maximum independent set of a rooted tree constructed in Lemma \ref{lem_struc} the {\em layered independent set}. \\


\noindent
{\bf A result of Holmgren and Janson~\cite{HJ15}.} Recalling notions from Holmgren and Janson~\cite{HJ15} a  {\em functional} of trees is a real-valued function of trees. For a rooted tree $T$ and a vertex $v\in T$ the {\em fringe tree} $T(v)$ is the subtree rooted at $v\in T$ which consists of all descendants of $v$ in $T$. For a functional $f$ of rooted trees we define
\begin{align}\label{ref_F}
F(T)=F(T;f):=\sum_{v\in T} f(T(v)).
\end{align}
Corollary 1.15 in~\cite{HJ15} states that for a functional $f$ with the growth condition $f(T)=\mathrm{O}(|T|^\alpha)$ for some $\alpha<\frac{1}{2}$ and the random binary search tree ${\cal T}_n$  we have $\E[F({\cal T}_n)]\sim \mu_F n$, $\V(F({\cal T}_n))\sim \sigma_F^2 n$ as $n\to\infty$, and that $F({\cal T}_n)$, after normalization, is asymptotically normal distributed. The constant $\mu_F$ is given by
\begin{align}\label{def_muF}
\mu_F=\sum_{k=1}^\infty \frac{2\,\E[f({\cal T}_k)]}{(k+1)(k+2)}.
\end{align}
Note that in (1.25) in~\cite{HJ15} also an expression for $\sigma_F^2$ is given. A similar result also holds for the random recursive tree $\Lambda_n$, where the corresponding constant $\widehat{\mu}_F$ is given by
\begin{align}\label{def_muFhat}
\widehat{\mu}_F=\sum_{k=1}^\infty \frac{\E[f(\Lambda_k)]}{k(k+1)}.
\end{align}
Further note that the proofs in~\cite{HJ15} also imply that $\E[F({\cal T}_n)]= \mu_F n +\mathrm{O}(1)$ and that $\E[F(\Lambda_n)]= \widehat{\mu}_F n +\mathrm{O}(1)$ under the stronger growth assumption that $f(T)=\mathrm{O}(1)$.

Putting things together now implies Theorems~\ref{thm_bst} and~\ref{thm_rrt}:\\

\noindent
{\em Proof of Theorem~\ref{thm_bst} and Theorem~\ref{thm_rrt}.}
Note that the independence number of a rooted tree can be covered as a function $F$ in (\ref{ref_F}) as follows. We set $f$ as the indicator function
\begin{align*}
f(T):=\left\{ \begin{array}{cl} 1, & \mbox{if the root of } T \mbox{ is contained in the layered independent set of } T,\\
0, & \mbox{otherwise.}\end{array}\right.
\end{align*}
The structure of the layered independent set in Lemma~\ref{lem_struc} implies that any vertex $v\in T$ is contained in the layered independent set of $T$ if and only if it is contained in the layered independent set of $T(v)$.

Hence, the independence number of $T$ is given by $F(T)=\sum_{v\in T} f(T(v))$ as in (\ref{ref_F}). This implies that $I_n =F({\cal T}_n)$ and  $\widehat{I}_n =F(\Lambda_n)$ in distribution. We have $f(T)=\mathrm{O}(1).$ Hence, Corollary 1.15 of Holmgren and Janson~\cite{HJ15} implies the assertions of Theorem~\ref{thm_bst} and Theorem~\ref{thm_rrt} where $\sigma,\widehat{\sigma}>0$ follows from numerical computation (see Remark~\ref{rem_thm}(b)) and only the constants $\mu$ and $\widehat{\mu}$ need to be identified. In view of (\ref{def_muF}) and (\ref{def_muFhat}) we need to find $\E[f({\cal T}_k)]$ and $\E[f(\Lambda_k)]$.

For the random recursive tree $T$  note that $T$ can be cut into two trees
by removing the edge between the root vertex labelled $1$ and the vertex labelled $2$. We denote the two resulting trees by $T_1$ and
 $T_2$. Now, the root of $T$ is contained in the layered independent set of $T$ if and only if the root of $T_1$ is contained in the layered independent set of $T_1$ and the root of $T_2$ is not contained in the  layered independent set of $T_2$.  Now, the decomposition property of the random recursive tree mentioned in the  introduction implies that with $\widehat{p}_n=\E[f(\Lambda_n)]$ we have the recurrence
 \begin{align}\label{def_pnhat}
\widehat{p}_n=\frac{1}{n-1}\sum_{j=1}^{n-1}(1-\widehat{p}_j)\widehat{p}_{n-j},\quad n\ge 2,
\end{align}
with initial condition $\widehat{p}_1:=1$. Furthermore, for the constant $\widehat{\mu}$ in Theorem \ref{thm_rrt} we have the representation
\begin{align}\label{def_mu_rrt}
\widehat{\mu}=\sum_{k=1}^\infty \frac{\widehat{p}_k}{k(k+1)}.
\end{align}
Now, to find the integral expression for $\widehat{\mu}$ in (\ref{euler_rep}) consider the generating function
\begin{align*}
\widehat{P}(z):=\sum_{k\ge 1}\widehat{p}_k z^k.
\end{align*}
From (\ref{def_pnhat}) and the initial conditions we obtain
\begin{align*}
z\widehat{P}'(z)=-\widehat{P}(z)^2+\frac{1}{1-z}\widehat{P}(z).
\end{align*}
This Riccati equation can be solved by standard methods: We define $\widehat{Q}(z)$ as $\widehat{P}(z)=z\widehat{Q}'(z)/\widehat{Q}(z)$ and obtain
\begin{align*}
\widehat{Q}''(z)=\frac{1}{1-z}\widehat{Q}'(z)
\end{align*}
which implies $\widehat{Q}'(z)=(1-z)^{-1}$ and thus $\widehat{Q}(z)=-\log(1-z)+c$ with a constant $c\in \R$. Hence, we obtain
\begin{align*}
\widehat{P}(z)=\frac{z}{(1-z)(-\log(1-z)+c)}
\end{align*}
and the initial condition $\widehat{P}'(0)=1$ yields $c=1$. Now we obtain
\begin{align*}
\widehat{\mu}=\sum_{k=1}^\infty \frac{\widehat{p}_k}{k(k+1)}
&= \int_0^1 \int_0^t \frac{1}{(1-z)(1-\log(1-z))}{\rm d}z{\rm d}t\\
&= \int_0^1 \int_z^1 \frac{1}{(1-z)(1-\log(1-z))}{\rm d}t{\rm d}z\\
&=\int_0^1 \frac{1}{(1-\log(1-z))}{\rm d}z,
\end{align*}
which, after substitution, is the expression in (\ref{euler_rep}) for the Euler--Gompertz constant. This concludes the proof of Theorem \ref{thm_rrt}.

For the binary search tree case note that the root of the tree $T$ is contained in its  layered independent set if and only if both children $v_\mathrm{\ell}$ and $v_\mathrm{r}$ of the root are not contained in the layered independent set of $T(v_\mathrm{\ell})$ and  $T(v_\mathrm{r})$ respectively. Now, the decomposition property of the random binary search tree mentioned in the introduction implies that with $p_k=\E[f({\cal T}_k)]$ we have the relation
\begin{align}\label{def_pn}
p_n:=\frac{1}{n}\sum_{j=0}^{n-1}(1-p_j)(1-p_{n-1-j}),\quad n\ge 1,
\end{align}
with initial value $p_0:=0$  and for $\mu$ in Theorem \ref{thm_bst} that
\begin{align}\label{def_mu}
\mu=\sum_{k=0}^\infty \frac{2p_k}{(k+1)(k+2)}.
\end{align}
Now, a similar derivation as for the previous case implies the integral representation for $\mu$ in (\ref{int_rep_mu}). \hfill $\square$\\

\section{Domination number}\label{dom}
In this section we will see that we again can apply Corollary 1.15 in~\cite{HJ15} (on normal limit laws for the number of fringe trees) to deduce normal limit laws for the domination number in the case of random binary search trees and random recursive trees. Note that the domination number is not directly related to the independence number; in particular it is not an affine function of the independence number.

{\em Proof of Theorem~\ref{thm_bstrstdom}.}
Let $T$ be a rooted tree with $n$ vertices and let $S$ be a minimum size dominating set of $T$ and let $D(T):=|S|$ be its size. In order to analyze the domination number we introduce the following descriptions of so-called {\em root-dependent and root-independent dominating sets.} Let  $r$ be the root vertex of $T$. We say that a subset of the vertices of $T\setminus r$ is a {\em root-dependent dominating set} if it is a minimum dominating set of $T\setminus r$ of size $D(T)-1$ (i.e., the dominating set becomes strictly smaller when the root is left out). If this is not possible, i.e., $D(T\setminus r)>D(T)-1$  we say that a minimum dominating set of the tree $T$ is a {\em root-independent dominating set}.

Let $T$ be a rooted tree, with $m$ children $i=1,2,\dots,m$, of the root $r$. Observe that the domination number $D(T)$ is bounded from above by $D(T_1)+\cdots +D(T_m)+1$ (we dominate each tree separately and then add the root), and from below by $D(T_1)+\cdots +D(T_m)-m+1$ (we add the root and manage to dominate each subtree minus its root by using a root-dependent dominating set).

It is now clear that we can construct a minimum dominating set $S$ of $T$ so that $v\in S$, if and only if, $v$ is contained in a root-independent dominating set of $T(v)$ except for maybe the root vertex $r$.

Indeed, if we have a minimum dominating set $S$ with a  vertex $v\neq r$ (not equal to the root vertex of $T$)  that is not contained in $S$ and $T(v)$ has a root-independent dominating set containing $v$, then all the vertices of $S$ from $T(v)$ form a dominating set of $T(v)\setminus v$. However, since $T(v)\setminus v$ has no root-dependent dominating set, this implies that this set is also a dominating set of $T(v)$. Thus, it can be replaced by the root-independent dominating set of $T(v)$ which contains $v$ without increasing the cardinality of $S$.

On the other hand, if $S$ contains a  vertex $v\neq r$ that is not contained in a root-independent dominating set of $T(v)$ it could either be because no root-independent dominating set exists or because every root-independent dominating set excludes $v$. We  now show that in both cases we can again modify $S$ without increasing the size.

In the first case, that is, if no such set exists, i.e., we have a root-dependent dominating set of $T(v)\setminus v$, we could remove $v$ from $S$ and replace it with its parent and then replace elements of $S$ coming from $T(v)\setminus v$ with a root-dependent dominating set of $T(v)\setminus v$. This does not increase the size of $S$ and it is still dominating.

In the second case, that is, if $T(v)$ has a root-independent dominating set, but no root-independent dominating set of $T(v)$ contains $v$, then we could replace $v$ with its parent, and use the root-independent dominating set on $T(v)$ instead (since no root-independent dominating set of $T(v)$ contained $v$, it must be inefficient to include $v$ in $S$ if we only wanted to dominate $T(v)$).

We can now do this construction inductively ending at the root $r$ (where we cannot "push up" our dominating set anymore). The root $r$ will be included if any child of the root has a corresponding subtree with a root-dependent dominating set or if all of these subtrees have root-independent dominating sets which all exclude their root (we call this the Property A).

Thus, the domination number of a rooted tree $T$ can be covered as a function $F$ in (\ref{ref_F}) as follows. For every vertex $v\neq r$ we set $f$ as the indicator function
\begin{align*}
f_\mathrm{dom}(T(v)):=\left\{ \begin{array}{cl} 1, & \mbox{if $v$} \mbox{ is contained in a root-independent dominating set of $T(v)$},\\
0, & \mbox{otherwise,}\end{array}\right.
\end{align*}
whereas for the root vertex $r$ with subtree $T(r)=T$ (the whole tree) we set $f$ as the indicator function
\begin{align*}
f_\mathrm{dom}(T(r)):=\left\{ \begin{array}{cl} 1, & \mbox{if $T(r)$ satisfies Property A},\\
0, & \mbox{otherwise.}\end{array}\right.
\end{align*}

Hence, the domination number of $T$ is given by $F_\mathrm{dom}(T)=\sum_{v\in T} f_\mathrm{dom}(T(v))$ as in (\ref{ref_F}). This implies that the domination numbers $D_n:=D({\cal T}_n) =F_\mathrm{dom}({\cal T}_n)$ and  $\widehat{D}_n:=D(\Lambda_n) =F_\mathrm{dom}(\Lambda_n)$ in distribution. We have $f_\mathrm{dom}(T(v))=\mathrm{O}(1).$ Hence, Corollary 1.15 of Holmgren and Janson~\cite{HJ15} implies the assertions of Theorem \ref{thm_bstrstdom}. \hfill$\square$

\section{Clique cover number}\label{ccsection}
Computing the clique cover number, see Remark \ref{rem_thm}(c), of a general graph is NP-hard~\cite{Karp}, and it is also NP-hard to approximate it up to a factor $n^{1-\varepsilon}$ for any $\varepsilon > 0$~\cite{Zuckerman}. However, it is well known that for triangle-free graphs, in particular trees, the clique cover number coincides with the independence number on trees, see~\cite{Goddard}.
Hence, the clique cover number of random binary search trees and random recursive trees is covered by Theorems \ref{thm_bst} and \ref{thm_rrt}. However, in this section we give a direct proof of Theorems \ref{thm_bst} and \ref{thm_rrt} for the clique cover number to show that this parameter can also be captured by the fringe tree representation and Corollary 1.15 in~\cite{HJ15}.

\par
{\em Proof:} For a tree $T$, consider $T(v)$ with root $v$ and subtrees $T_1, \ldots, T_k$ with corresponding roots $v_1, \ldots, v_k$ that are the children of $v$. For such a tree $T(v)$, let $\mathcal{E}_v$ be the indicator event that there exists a subtree $T_i$ and an optimal clique coloring of the vertices of $T_i$ (that is, a coloring using a minimal number $C(T_i)$ of colors, so that every edge of the complement of $T_i$ is such that its incident vertices get different colors) such that $v_i$ is the only vertex with color $1$ in $T_i$. We then set $f$ as the indicator function
\begin{align*}
f_\mathrm{cc}(T(v)):=\left\{ \begin{array}{cl} 0, & \mbox{if } \mathcal{E}_v \mbox{ holds, } \\
1, & \mbox{otherwise.}\end{array}\right.
\end{align*}
We show now that the clique cover number of $T$ is equal to the number of vertices that were assigned $1$. Indeed, we will show that there exists an optimal clique coloring which uses that number of colors. This coloring will be constructed inductively over all layers bottom up. Moreover, we will simultaneously prove by induction that our coloring indeed is proper and optimal. The deepest layer contains the set of leaves. Every leaf is assigned $1$ under $f$ since there are no subtrees of the leaves.  Clearly all leaves are adjacent in the complement, so the set of leaves forms a clique in the complement, and thus all leaves must have different colors. The base case is satisfied.  Now, suppose inductively that for a layer $\ell$ with vertices $u_1, \ldots, u_{j_{\ell}}$, $\bigcup_{i=1}^{j_{\ell}} F_i$ is optimally colored (optimal in the sense of the clique cover number), where $F_i$ is the forest corresponding to the union of subtrees (at level $\ell-1$) pending from vertex $u_i$. Now, we color the $u_i$'s as follows: assume that for $F_i$ say $t$ colors are used. Shift these $t$ colors to the set $\{1,\ldots, t\}$ and then try all possible permutations of $\{1,\ldots,t\}$ to check whether there exists a permutation such that $\mathcal{E}_{u_i}$ holds. If there is a permutation such that $f(T(u_i))$ evaluates to $0$, assign to $u_i$ the same color before the shift of the colors that was used for the root that was assigned color $1$ after shifting and permuting colors. Otherwise, assign to $u_i$ a color which was not used yet. We have to show now that this coloring of $u_1,\ldots,u_{j_{\ell}}$ gives a proper and optimal coloring of $\bigcup_{i=1}^{j_{\ell}} T(u_i)$. First, we show that it is proper. Note that the $T(u_i)$'s are all colored properly by definition of the color of $u_i$ and the induction hypothesis which implies that any two vertices $k_1, k_2$ from different trees in $F_i$ have different colors. Moreover, again by induction hypothesis, any two vertices $k_1, k_2$ from different forests $F_i$ have also different colors. Thus, it suffices to show that all $u_i$'s are colored differently since the subgraph induced by these vertices form a clique in the complement. However, this is clear since the colors of the $u_i$'s either come from $F_i$ or are entirely new colors. Thus, the coloring is indeed proper. To show that the coloring is optimal, first note that the clique cover number is monotone under adding vertices: if two vertices need to be assigned different colors in a subtree (subforest), they still need to be assigned different colors after adding a new vertex. If a vertex $u_j$ is assigned $0$, then no new color is used for such a vertex, and this coloring remains optimal. If a vertex $u_j$ is assigned $1$, then note that $u_j$ must obtain a color different from all other vertices except for possibly those that are roots of the pending subtree (since $u_j$ is adjacent to all of them in the complement). If there were a coloring assigning $u_j$ the same color as the root of a pending subtree (and no other vertex of the subtree), then after permuting the colors one could assign to such a root color $1$, and to no other vertex in the subtrees of  $T(u_j)$ has color $1$, and hence $u_j$ would be assigned $0$, contradicting this possibility. Hence $u_j$ must be assigned a new color, and the coloring remains optimal.

Hence, the clique cover number of $T$ is given by $F_\mathrm{cc}(T)=\sum_{v\in T} f_\mathrm{cc}(T(v))$ as in (\ref{ref_F}). This implies that the clique cover numbers $C_n:=C({\cal T}_n) =F_\mathrm{cc}({\cal T}_n)$ and  $\widehat{C}_n:=C(\Lambda_n) =F_\mathrm{cc}(\Lambda_n)$ in distribution. We have $f_\mathrm{cc}(T(v))=\mathrm{O}(1).$ Hence, Corollary 1.15 of Holmgren and Janson~\cite{HJ15} implies the assertions of Theorems \ref{thm_bst} and \ref{thm_rrt}.\hfill$\square$\\

\noindent
{\bf Acknowledgement.} The results of the present note were obtained during the Twelfth Annual Workshop on Probability and Combinatorics at McGill University's Bellairs Research Institute. The authors thank the participants, in particular Luc Devroye and Remco van der Hofstad, for helpful discussions on the present problem. The hospitality and support of the institute is also acknowledged. When later also discussing our results with Stephan Wagner he told us that he too together with his PhD student Kenneth Dadedzi independently has shown results similar of our Theorems 1.1-1.2, see \cite{dadedzi}.

\end{document}